\documentclass[11pt,reqno]{amsart}

\usepackage{amsmath,amssymb}
\usepackage{latexsym}
\usepackage{epsfig}
\usepackage{cite}
\usepackage[english]{babel}
\usepackage{booktabs}
\usepackage{graphicx}
\usepackage{tikz}
\usepackage{pgfplots}
\usepackage{eurosym}
\usepackage[inner=4cm,outer=2cm]{geometry}
\usepackage{amsthm}
\usepackage{physics}

\usetikzlibrary{arrows,decorations.pathmorphing,backgrounds,positioning,fit,petri,decorations}
\usetikzlibrary{calc,intersections,through,backgrounds,mindmap,patterns,fadings}
\usetikzlibrary{decorations.text}
\usetikzlibrary{decorations.fractals}
\usetikzlibrary{fadings}
\usepackage{pgflibraryshapes}
\usetikzlibrary{lindenmayersystems}
\usetikzlibrary{shadings}
\usetikzlibrary{shadows}
\usetikzlibrary{shapes.geometric}
\usetikzlibrary{shapes.callouts}
\usetikzlibrary{shapes.misc}
\usetikzlibrary{spy}
\usetikzlibrary{topaths}
\usetikzlibrary{datavisualization}
\usetikzlibrary{datavisualization.formats.functions}

\newenvironment{sistema}
{\left\lbrace\begin{array}{@{}l@{}}}
{\end{array}\right.}

\newtheorem{teo}{Theorem}
\newtheorem{rem}{Remark}
\newtheorem{prop}{Proposition}
\newtheorem{cor}{Corollary}
\newtheorem{tdef}{Definition}

\setlength{\textheight}{27pc}
\oddsidemargin-0.5truecm
\evensidemargin-0.5truecm
\textwidth17truecm
\textheight22truecm
\topmargin-.1truecm

\numberwithin{equation}{section}


\pagestyle{myheadings}
\setcounter{page}{1}
\pagenumbering{roman}
\setcounter{page}{1}
\pagenumbering{arabic}

\numberwithin{equation}{section}

\title{Volume of Tubes and Concentration of Measure in Riemannian Geometry}
\author{S.L.~Cacciatori$^{1,2}$ and P.~Ursino$^{3}$}

\address{$^1$ Department of Science and High Technology, Universit\`a dell'Insubria, Via Valleggio 11, IT-22100 Como, Italy}
\address{$^2$ INFN sezione di Milano, via Celoria 16, IT-20133 Milano, Italy}
\address{$^3$ Department of Mathematics and Informatics, Universit\`a degli Studi di Catania, Viale Andrea Doria 6, 95125 Catania, Italy}

\begin{document}
\maketitle
\begin{abstract}

We investigate the notion of concentration locus introduced in \cite{CacUrs22}, in the case of Riemann manifolds sequences and its relationship with the volume of tubes. After providing a general formula for the volume of a tube around a Riemannian submanifold of a Riemannian manifold, we specialize it to the case of totally geodesic submanifolds of compact symmetric spaces. In the case of codimension one, we prove explicitly concentration. Then, we investigate for possible characterizations of concentration loci in terms of Wasserstein and Box distances.

\end{abstract}
\section*{Introduction}
A concentration locus is roughly speaking a sequence of sub-manifolds $(M_n,\sigma_n,g_n)$ (where $g_n$ is the geodetic metric and $\mu_n$ the volume measure) which approximates the concentration behaviour of the manifolds $(N_n,\mu_n,g_n)$ where they are embedded.
In a sense, the concentration character of the ``big'' sequence is fully determined by the ``thin'' one.
This phenomenon is particularly significant, from the point of view of applications, whenever it is possible to single out, inside a sequence of manifolds of unknown concentration behaviour, a sequence of much simpler sub-manifolds which is a concentration locus and therefore, provided the concentration behaviour of the sub-manifolds is known, it determines the concentration behaviour of the big one.

Both sequences can be regarded as sequences of metric-measurable spaces (mm-spaces). In the space $\mathfrak{M}$ of all mm-spaces Gromov, in his celebrated green book \cite{Gro99}, defines the notion of observable distance ($d_{conc}$ in \cite{Shi}) which fully generalizes the classical phenomenon of concentration of measure to a point and Levy family (see for example \cite{GrMi}).
Practically, we say that a sequence of mm-spaces concentrates to an mm-space, whenever the former $d_{conc}$ converges to the latter.

Our primary goal consists in finding a way to detect if a sequence of sub-manifolds is or is not a concentration locus by investigating the sequences of volumes of tubes built around the sub-manifolds, with decreasing rays.

The problem of calculating the volume of a tube in a Riemann manifold is very interesting in itself and it is treated in Section 1. We start from the well-known article of Weyl \cite{Weyl} and the beautiful book of Gray \cite{Gray}.
We succeed, using the approach of Gray, in finding a general formula for the volume of tubes which involves the codimension, the ray of the tube, the curvature of the ambient manifold and the killing curvatures of the sub-manifold.
Unfortunately, it is widely useless in computing concentration locus unless you have an estimate of the curvature derivatives of every order.
The case of symmetric spaces is much more practicable, instead. Indeed, we find a concrete formula and we are able to extend the system of coordinates used for calculating the volume of the tubes to the entire ambient manifold and therefore calculate the asymptotic conditions to detect a concentration locus.  We believe that our results can be easily extended to the case of homogeneous spaces.

Observable distance is a very difficult tool to deal with, fortunately, there are more practical distances that are related to $d_{conc}$.
For example, Wasserstein distance $d_{W}$, which is related to optimal transport, and $d_{box}$, which makes $\mathfrak{M}$ a complete metric space. The following holds $d_{W}\Rightarrow d_{box}\Rightarrow d_{conc}$ even if $d_{W}\nLeftarrow d_{box}\nLeftarrow d_{conc}$ (there are $d_{conc}$ cauchy sequence which does not converge \cite{ShiKa}).
On the other side, it is very hard to find differentiable optimal transport. Indeed,  McCann et al. showed that the Monge-Kantorovich problem is solvable for smooth Riemannian manifolds \cite{MCannFeld}. Regarding differentiable solutions, Figalli, Rifford, and Villani \cite{FigRiffVil} solved it in the positive for the spheres \cite{FigRiffVil}, while McCann and Young-Heon for $\mathbb {CP}^n$ \cite{McCannYoung}.

However, our Wasserstein problem is a special one, since we are looking for a transport to the push forward measure induced by the transport itself. In this case, the problem assumes differential geometry features and a viscosity solution of the following Hamilton-Jacobi equation
\[
 \begin{sistema}
	|\nabla u |^2=1\ \mbox{in} \ M\setminus C\\
	u=0\ \mbox{in} \ C
\end{sistema}
\]
provides an answer to our problem. In particular, the distance function, which solves the previous equation, singles out a projection function which is exactly the function that minimizes the distance from the target sub-manifold. This solution is unique up to a measure null set (focal points) and up to this set it is differentiable.

By a well-known result, which is a consequence of Hopf-Rinow Theorem (see Proposition 5 Cheeger \cite{CheeEbin1}), the trajectories (the geodesics) of this projection, through which the mass is transported, are orthogonal to the boarder of $C$. In particular,
$\gamma$, the geodesic or trajectory which has length exactly the distance, is uniquely determined and,
it is the super-differential of the distance function.

All these results can be seen in \cite{CanSin} in $R^n$ context or in \cite{ManteMen} or in \cite{Fath} in the Riemann geometry context.

We succeed in Section 2 in finding a characterization of concentration locus in terms of $d_{box}$ and projection distance, and this allows us to prove Corollary \ref{ConcLocConv}, which guarantees that whenever a concentration locus converges $d_{conc}$ to an mm-space $(H,\mu,g)$, the sequence of ambient manifolds concentrates to $(H,\mu,g)$.

\section*{Preliminaries.}
Given a set $X$, we denote by $l^{\infty}(X)$ the unital
Banach algebra of all bounded real-valued functions on $X$ equipped with the supremum norm.
Let $X$ be a topological space. If the topology of $X$ is generated by a metric $d$, then we call
$d$ a compatible metric on $X$.\\
We will denote by $C(X)$ the set of all continuous real-valued functions on $X$, and we set $CB(X)=l^{\infty}(X)\cap C(X)$. Let us denote by $\mathcal{B}(X)$ the
Borel $\sigma$-algebra of $X$ and by $P(X)$ the set of all Borel probability measures on $X$.
The weak topology on $P(X)$ is defined to be the initial topology on $P(X)$ generated by the maps of the
form $P(X) \rightarrow \mathbb{R}, \mu\rightarrow \int f d\mu$ where $f \in CB(X) $.\\
The support of a measure $\mu\in P(X)$ is defined as
\begin{align*}
spt\mu=\{x\in X\mid \forall U\subseteq X\mbox{ open: }x\in U\Rightarrow \mu (U)>0\}.
\end{align*}
Given $\mu\in P(X)$ and a Borel subset $B \subseteq X$
with $\mu(B) = 1$, we let $\mu|_B :=\mu|_{\mathcal{B}(B)\subset P(B)}$. The push-forward of a measure $\mu\in P(X)$ along
a Borel map $f : X\rightarrow Y$ into another topological space $Y$ is defined to be
\begin{align*}
 \sharp f\mu:\mathcal{B}(Y)\rightarrow [0,1], B\mapsto \mu (f^{-1}(B)).
\end{align*}
Furthermore, let us note that each $\mu\in P(X)$ gives rise to a pseudo-metric
$me_{\mu}$ on the set of
all Borel measurable real-valued functions $X$ defined by
\begin{align*}
me_{\mu}(f,g):=\inf\{\epsilon >0 \mid \mu (\{x\in X\mid \mid f(x)-g(x)\mid >\epsilon \})\le\epsilon\},
\end{align*}
for any two Borel functions $f,g:X\rightarrow\mathbb{R}$.
Let $(X, d)$ be a pseudo-metric space. Given a subset $A\subseteq X$, we abbreviate $d|_A := d|_{A\times A}$ and define ${\rm diam}(A,d):=\sup\{d(x,y)\mid x,y\in A\}$. For $x\in A\subseteq X$ and $\epsilon >0$ we set
\begin{align*}
B_d(x,\epsilon ):=\{y\in X\mid d(x,y)<\epsilon \}\ \ B_d(A,\epsilon ):=\{y\in X\mid \exists a\in A ; d(x,y)<\epsilon \}.
\end{align*}
Then the Hausdorff distance between any two subsets $A,B \subseteq X$ is given by
\begin{align*}
d_H(A,B):=\inf \{\epsilon >0\mid B\subseteq B_d(A,\epsilon ),\ A\subseteq B_d(B,\epsilon )\}.
\end{align*}
For $l,r\ge 0$, we denote by $Lip_l(X, d)$ the set of all $l$-Lipschitz real-valued functions on $(X, d)$,
and we define
\begin{align*}
Lip_l^{\infty}(X,d):=Lip_l(X,d)\cap l^{\infty}(X),\ Lip_l^{r}(X,d):=\{f\in Lip_l(X,d)\mid \|f\|_{\infty}\le r\}.
\end{align*}
Moreover, we set $Lip(X,d):=\bigcup \{Lip_l(X,d)\mid l\ge 0\}$ and $Lip(X,d)^{\infty}:=Lip(X,d)\cap l^{\infty}(X)$.\\
Whenever $(X,d)$ is a separable metric space, the Wasserstein distance $W_1(\mu,\nu)$\footnote{Different names appearing in the literature include Monge-Kontorovich distance, bounded Lipschitz distance,
mass transportation distance, and Fortet-Mourier distance \cite{VilBook}} is a compatible metric for a weak topology on $P(X)$ defined by
\begin{align*}
W_1(\mu,\nu):=\sup_{f\in Lip_1^{1}(X,d)}\left | \int f d\mu -\int f d\nu \right | \ \ (\mu.\nu\in P(X)).
\end{align*}

\begin{tdef}{(Gromov-Milman \cite{Gro99})}\\
  A space with a metric $g$ and a measure $\mu$, or an mm-space, is a triple $(X,\mu,g)$, consisting of a set $X$, a metric $g$ on $X$ and a probability Borel measure such that $(X,g)$ is a separable complete metric space.
\end{tdef}
Moreover, an mm-space $(X,\mu ,d)$ is called compact if $(X, d)$ is compact, and fully supported if $spt \mu= X$.
Henceforth, we will denote by $\lambda$ the Lebesgue measure on $[0, 1)$.\\
A parametrization of an mm-space $(X,\mu ,d)$ is a Borel measurable map $\phi: [0, 1)\rightarrow X$ such that $\sharp\phi\lambda = \mu$. It is well known that any mm-space admits a parametrization (see, e.g.\cite{Shi}).
In the set of isomorphism classes of mm-spaces, $\mathfrak{M}$, we can define the box distance, $d_{box}$, that we are going to define (\cite{Shi}).
For two pseudo-metrics $\rho_1$ and $\rho_1$
on the unit interval $I$, we define $d_{box}(\rho_1,\rho_2)$ to be the infimum of $\epsilon >0$ satisfying that
there exists a Borel subset $I_0\subseteq I$ such that
\begin{enumerate}
          \item $\vert\rho_1(s,t),\rho_2(s,t)\vert\le\epsilon$ for any $s,t\in I_0$,
          \item $\mathfrak{L}^1(I_0)\ge 1-\epsilon$ where $\mathfrak{L}^1$ denotes the one-dimensional Lebesgue measure.
\end{enumerate}

\begin{tdef}
  let $X$ be a topological space with a Borel probability measure $\mu_X$. A map $\varphi:I\rightarrow X$ is called a parameter of $X$ if $\varphi$ is a Borel measurable map such that

  $\sharp\varphi\mathfrak{L}^1=\mu_X$
\end{tdef}

We define box distance $d_{box}$ between two isomorphism classes of mm-spaces $X,Y$ to be the infimum of $d_{box}(\sharp\varphi d_X,\sharp\psi d_Y)$ where
$\varphi:I\rightarrow X$ and $\psi:I\rightarrow Y$ run over all parameters of $X$ and $Y$, respectively, and where $\sharp\varphi d_X(s,t):=d_X(\varphi(s),\varphi(t))$.

\begin{tdef}{(Gromov-Milman \cite{Gro99}\cite{Shi})}\\
In the set of isomorphism classes of mm-spaces we can define the following distance:
\begin{align*}
d_{conc}(X,Y):= \inf\{(me_{\lambda})_H(Lip_1(X)\circ\phi , Lip_1(Y)\circ\psi\mid \phi\mbox{ param. of }X,\psi\mbox{ param. of }Y\}.
\end{align*}
\end{tdef}
Two mm-spaces $X$ and $Y$ are isomorphic if there exists an isomorphism between mm-spaces $(X,\mu ,d),(Y,\nu ,d')$ i.e an isometry
\begin{align*}
f: (spt \mu,d\mid X)\rightarrow (spt \nu,d'\mid Y)
\end{align*}
such that  $\sharp f(\mu\mid {spt\mu})=\nu\mid {spt\nu}$.
A sequence of mm-spaces $(X_n,\mu_n,g_n)$ is said to concentrate to an mm-space $(X,\mu,g)$ if
\begin{align*}
\lim_n d_{conc}(X_n,X)=0.
\end{align*}

In this case, we denote $(X,\mu,g)$ as a concentration set for the sequence of mm-spaces $(X_n,\mu_n,g_n)$.\\
Finally, let us recall the definition of Concentration Locus as defined in \cite{CacUrs22}

\begin{tdef}\label{def2}
 Let $\{X_n, \mu_n\}_{n\in \mathbb N}$ be a family of metric spaces with metrics $d_n$, and Borel's measures $\mu_n$ w.r.t. which nonempty open set have non-vanishing measure. Assume the measures to be normalized, $\mu_n(X_n)=1$.
 Let $\{S_n\}_{n\in \mathbb N}$ be a family of proper closed subsets, $S_n\subset X_n$. Fix a sequence $\{\varepsilon_n\}_{n\in \mathbb N}$ such that $\varepsilon_n>0$,  $\lim_{n\to\infty}\varepsilon_n=0$, and let
 $\{U^{\varepsilon_n}_n\}_{n\in \mathbb N}$ be the sequence of tubular neighbourhoods of $S_n$ of radius $\varepsilon_n$. We say that the family
 $\{S_n\}$ is a Concentration Locus if
 \begin{align}
 \lim_{n\to\infty} \mu_n(X_n-U^{\varepsilon_n}_n)=0.
 \end{align}

 Moreover, if such a sequence $\varepsilon_n$ converges to 0 at rate $k$ (so that $\lim_{n\to\infty} n^k\varepsilon_n=c$ for some constant $c$), we say that the family $\{S_n\}$ is a Concentration Locus at least at rate $k$.
\end{tdef}

\section{Volume of tubes in compact manifolds. }
Let $M$ be a compact Riemannian submanifold $M\subseteq N$ of codimension $q$ of a manifold $N$. We call $M_{\epsilon}$ the tube generated by all geodetic segments of length $\epsilon$, outgoing perpendicularly from $M$. It is a well-known result established by Weyl that the volume of the tube can be expressed in terms of the Lipschitz-Killing curvatures $K_{2j}$ of $M$ and the ambient space, and the codimension $q$. More precisely:
\begin{teo}[Weyl, \cite{Weyl}]
 Let $M$ be a compact Riemannian submanifold of $\mathbb R^N$ of codimension $q=N-n$. Let $M_\varepsilon$ a tubular neighbourhood of $M$ of radius $\varepsilon$. Then, for all $r>0$ sufficiently small, it holds
 \begin{align}
 {\rm Vol}_{\mathbb R^N}(M_\varepsilon)=\frac {\pi^{\frac q2}\varepsilon^q}{\Gamma(\frac q2+1)} \left(K_0(M) +\sum_{j=1}^{\lfloor n/2\rfloor} \frac {K_{2j}(M)\varepsilon^{2j}}{(q+2)(q+4)\cdots (q+2j)}\right),
 \end{align}
 where
 \begin{align}
 K_{2j}(M) =\int_M k_{2j}(\Omega),
 \end{align}
 are the integrated Lipschitz-Killing curvatures, and $\Omega$ is the curvature 2-form of $M$.
\end{teo}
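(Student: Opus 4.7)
The plan is to parametrize $M_\varepsilon$ by the normal exponential map and reduce the volume computation to a fibered integral over $M$. Since the ambient space is flat, for $p\in M$ and $\xi$ in the normal disk bundle $\{(p,\xi)\in NM:\ |\xi|\le\varepsilon\}$ one has $\exp_p(\xi)=p+\xi$; for $\varepsilon$ smaller than the focal radius of $M$ the resulting map $\Phi$ is a diffeomorphism onto $M_\varepsilon$, so
\begin{align*}
\mathrm{Vol}_{\mathbb{R}^N}(M_\varepsilon)=\int_M\int_{\{\xi\in N_pM:\,|\xi|\le\varepsilon\}} J(p,\xi)\,d\xi\,dV_M(p),
\end{align*}
where $J$ is the Jacobian of $\Phi$.

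The second step is to compute $J(p,\xi)$ pointwise. Fixing $p$, I would choose orthonormal frames $e_1,\dots,e_n$ of $TM$ and $\nu_1,\dots,\nu_q$ of $NM$, the latter chosen to be parallel at $p$ with respect to the normal connection. Writing $\xi=\sum_\alpha t^\alpha\nu_\alpha$, the differential $d\Phi$ fixes each $\nu_\alpha$ and sends $e_i\mapsto e_i+\sum_\alpha t^\alpha\nabla_{e_i}\nu_\alpha$. At $p$ the normal component of $\nabla_{e_i}\nu_\alpha$ vanishes by the parallel choice of frame, and the tangential component equals $-A_{\nu_\alpha}e_i$ by the Weingarten relation, so in the decomposition $T_pM\oplus N_pM$ the matrix of $d\Phi$ is block-triangular and
\begin{align*}
J(p,\xi)=\det(I-A_\xi),\qquad A_\xi:=\sum_\alpha t^\alpha A_{\nu_\alpha}.
\end{align*}

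Next, I would expand $\det(I-A_\xi)=\sum_{k=0}^n(-1)^k\sigma_k(A_\xi)$ as elementary symmetric functions of the eigenvalues. Each $\sigma_k(A_\xi)$ is homogeneous of degree $k$ in the variables $t^\alpha$, so the odd-$k$ terms vanish on the antipodally symmetric disk, while for $k=2j$ separating the radial and spherical integrations produces a radial factor $\varepsilon^{q+2j}/(q+2j)$ times an angular integral of $\sigma_{2j}(A_\omega)$ over $S^{q-1}$. This angular integral must then be matched, pointwise on $M$, with the Lipschitz--Killing density $k_{2j}(\Omega)$. The bridge is the Gauss equation in the flat ambient,
\begin{align*}
R_{ijkl}=\sum_\alpha\bigl(h^\alpha_{ik}h^\alpha_{jl}-h^\alpha_{il}h^\alpha_{jk}\bigr),\qquad h^\alpha_{ij}:=\langle A_{\nu_\alpha}e_i,e_j\rangle,
\end{align*}
together with the classical pairing formula for monomial integrals $\int_{S^{q-1}}\omega^{\alpha_1}\cdots\omega^{\alpha_{2j}}\,d\omega$. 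Collecting all numerical constants (the beta-function values from the pairings, the radial factor $1/(q+2j)$, and $\mathrm{Vol}(S^{q-1})=q\pi^{q/2}/\Gamma(q/2+1)$) yields the announced normalization $\pi^{q/2}/\bigl[\Gamma(q/2+1)(q+2)(q+4)\cdots(q+2j)\bigr]$. Note that the sum terminates at $j=\lfloor n/2\rfloor$ since $\sigma_{2j}(A_\xi)$ vanishes identically whenever $2j>n$.

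The main obstacle is precisely this last combinatorial identification: one must verify that, once the products of second fundamental forms $h^\alpha_{ij}$ appearing in $\sigma_{2j}(A_\omega)$ have their normal indices paired by the spherical integral, the resulting quadratic blocks reorganize, via the Gauss equation, into exactly the totally antisymmetric contraction of $R^{\otimes j}$ that defines $k_{2j}$, and that the numerical coefficient produced by the pairings matches $(q+2)(q+4)\cdots(q+2j-2)$. I would carry this out using the generalized Kronecker $\delta$ formulation of $k_{2j}$ and comparing both sides on a basis of monomials in the $h^\alpha_{ij}$; this is the algebraic core of Weyl's original tube theorem and embodies the striking fact that in flat ambient space the volume of the tube depends only on the intrinsic geometry of $M$.
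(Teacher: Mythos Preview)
The paper does not give a proof of this theorem at all: it is stated as Weyl's classical result, with a citation to \cite{Weyl}, and is used only as background before the authors develop their own general volume-element formula (Proposition~2 and what follows) for tubes in curved ambient spaces. There is therefore no ``paper's own proof'' to compare against.

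That said, your proposal is a correct outline of the standard proof --- essentially Weyl's original argument as presented, for instance, in Gray's book \cite{Gray}, which the paper also cites. The parametrization by the normal exponential map, the identification $J(p,\xi)=\det(I-A_\xi)$ in flat ambient space, the parity argument killing odd powers, and the use of the Gauss equation to pass from second fundamental forms to intrinsic curvature are all the right ingredients. You are also right that the nontrivial step is the combinatorial matching of the spherically averaged $\sigma_{2j}(A_\omega)$ with the Lipschitz--Killing density $k_{2j}(\Omega)$; you describe the mechanism (pairing of normal indices via the moment integrals on $S^{q-1}$, then Gauss) accurately, but you have not actually carried it out, so as written this remains a sketch rather than a complete proof. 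If you want a self-contained argument, the cleanest route is the double-form calculus in Gray's Chapter~4, which packages exactly this combinatorics.
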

Remember that if $e^a$ is the dual basis to an orthonormal frame $V_a$, $a=1,\ldots,n$, then
\begin{align}
 k_{2j}(\Omega)=\frac 1{2^j j! (n-2j)!} \sum_{\sigma\in S_n} \epsilon_\sigma \Omega_{\sigma(1)\sigma(2)}\wedge \cdots \wedge \Omega_{\sigma(2j-1)\sigma(2j)} e^{\sigma(2j+1)}\wedge \cdots \wedge e^{\sigma(n)},
\end{align}
where $S_n$ is the set of permutations of $n$ elements and $\epsilon_\sigma$ the sign of the permutation. In particular,
\begin{align}
 k_0(\Omega)=d{\rm Vol}_M,
\end{align}
is the volume form on $M$,
\begin{align}
 k_2(\Omega)=&\frac 12 R\ d{\rm Vol}_M, \\
 k_n(\Omega)=& {\rm Pf}(\Omega),
\end{align}
where $R$ is the scalar curvature of $M$ and Pf the Pfaffian. Finally, notice that
\begin{align}
 \frac {\pi^{\frac q2}\varepsilon^q}{\Gamma(\frac q2+1)}={\rm Vol}_{\mathbb R^q} (D_\varepsilon)
\end{align}
is the volume of the $q$ dimensional disc of radius $\varepsilon$ in $\mathbb R^q$. So, if we define the {\it mean Lipschitz-Killing curvatures} $\kappa_{2j}$ as
\begin{align}
\kappa_{2j}=\frac {K_{2j}(M) }{{\rm Vol}_M(M)},
\end{align}
then we can rewrite Weyl's formula as
 \begin{align}
 {\rm Vol}_{\mathbb R^N}(M_\varepsilon)={\rm Vol}_M(M) {\rm Vol}_{\mathbb R^q} (D_\varepsilon)  \left(1 +\sum_{j=1}^{\lfloor n/2\rfloor} \frac {\kappa_{2j}(M)\varepsilon^{2j}}{(q+2)(q+4)\cdots (q+2j)}\right).\label{formula}
 \end{align}
We are interested in understanding the volumes of tubular neighborhoods of submanifolds of compact manifolds (and, in general, on manifolds with positive curvature). However, it is interesting to do some general considerations on this formula before discussing
the more general case.

\

Since there is no curvature in the directions of $\mathbb R^N$ orthogonal to $M$, the deformations of the volume are only due to the bending of $M$. If $M$ is flat, then the volume of the tube is just the product of the volumes of the submanifold and the disc.
The terms in the parenthesis then give the contributions of the deformations of the tube to the volume, when we bend the tube along a curved $M$. For example, if we bend the tube neighborhood of a segment to the one of a circle, the tube will be compressed
along the most internal circle and stretched along the most external one. However, the volume doesn't change (if we don't change the length of the segment), indeed the scalar curvature of the circle is $R_{S^1}=0$.\\
Now, since $\kappa_{2j}$ are mean curvatures, we can get some hints about the dependence on curvatures by considering $M$ to be a manifold of constant sectional curvature $1/r^2$. In this case, one has
\begin{align}
 \varepsilon^{2j} \kappa_{2j}(\Omega)=\frac {n!}{2^j j! (n-2j)!} \left(\frac {\varepsilon}{r}\right)^{2j}.
\end{align}
We want to see under which conditions the curvature terms become relevant in the formula (\ref{formula}) when $n$ grows. In general, this also could imply that generically also $q$ grows. Keeping $j$ fixed, we see that Stirling's formula implies
\begin{align}
 \varepsilon^{2j} \kappa_{2j}(\Omega)\approx \frac 1{2^jj!} \left(\frac {n\varepsilon}{r}\right)^{2j}.
\end{align}
If $\varepsilon/r$ is small but constant, the curvature terms become dominant for large $n$, at least if $q$ is constant. Despite we are considering spheres embedded in $\mathbb R^{n+q}$, let us for a moment imagine assuming $q=1$
and that $\mathbb R^{n+q}$ is replaced by $S_r^{n+q}$. In this case, it is well known that the measure of the whole sphere concentrates in a tube of radius $\varepsilon\sim r n^{-\frac 12}$ around the equator $M$. In this situation, we see that
\begin{align}
 \varepsilon^{2j} \kappa_{2j}(\Omega)\approx \frac 1{2^jj!} n^j,
\end{align}
so the curvature terms are dominant w.r.t. the 1. If also the codimension $q\equiv q_n$ increases unboundedly, then, including the denominators, we see that the contributions are of the order
\begin{align}
 \frac 1{2^jj!} (n/q_n)^j,
\end{align}
so that the dominance of the curvatures persists if $q_n$ grows slower than $n^{1-a}$ for any fixed arbitrarily small but positive $a$. This is also another well-known condition for concentration. Why should we consider these considerations acceptable if we
replace the flat ambient space with spheres? The reason is that the spheres have curvature $1/r^2$ much smaller than the inverse square radius of the tube, $\sim n/r^2$. This suggests that in general, we may obtain the same results if we have an estimation of the bound of the curvature of the ambient manifold $N$ and a control on the error we make in using the flat formulas as a function of the radius of the tube.

\

Since we are interested in compact manifolds, we recall that Weyl deduced the exact formula for the case when the embedding space is a sphere. From this result we can deduce:
\begin{prop}
 Let $M\subset S_R^{n+q}$ a $n$ dimensional smooth compact submanifold of codimension $q$ of a sphere of radius $R$, and $M_\varepsilon$ the tube of radius $\varepsilon$ around $M$. Then, we can write
 \begin{align}
 {\rm Vol}_{S_R^{n+q}}(M_\varepsilon)={\rm Vol}^{flat}(M_\varepsilon) \left(1+o(\varepsilon/R) \right),
 \end{align}
 where ${\rm Vol}^{flat}_N(M_\varepsilon)\equiv {\rm Vol}_{\mathbb R^{n+q}}(M_\varepsilon)$ is given by Weyl's formula for the embedding in the flat $\mathbb R^{n+q}$.
\end{prop}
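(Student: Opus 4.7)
The plan is to derive the claim from Weyl's closed-form formula for the volume of a tube about a submanifold of a sphere, by Taylor expanding in the small parameter $\varepsilon/R$.

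First I would recall Weyl's spherical tube formula (see \cite{Weyl} and Gray \cite{Gray}). It is obtained by computing the Jacobian of the normal exponential map $\Phi:\nu M\to S_R^{n+q}$ through Jacobi fields along unit normal geodesics issuing from $M$: in the sphere these satisfy $Y''+R^{-2}Y=0$ rather than $Y''=0$, so their solutions are trigonometric. Combined with the shape operator $S_\nu$ of $M$, which supplies the initial data for the tangential Jacobi fields, this gives
\[
{\rm Vol}_{S_R^{n+q}}(M_\varepsilon)=\int_M\int_0^\varepsilon\!\int_{S^{q-1}} R^{q-1}\sin^{q-1}(t/R)\,\det\!\bigl(\cos(t/R)\,I - R\sin(t/R)\,S_\nu\bigr)\,d\sigma(\nu)\,dt\,d{\rm Vol}_M,
\]
the spherical counterpart of the integral representation underlying \eqref{formula}.

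Second, I would Taylor-expand around $t/R=0$. Since both $\sin$ and $\cos$ have only even-order correction terms,
\[
R\sin(t/R)=t\bigl(1+O((t/R)^2)\bigr),\qquad \cos(t/R)=1+O((t/R)^2),
\]
so the spherical integrand becomes $t^{q-1}\det(I-t\,S_\nu)\bigl(1+O((t/R)^2)\bigr)$, whose leading piece is precisely the integrand that yields Weyl's flat formula. Integrating over $t\in[0,\varepsilon]$, $\nu\in S^{q-1}$ and $M$ reproduces ${\rm Vol}^{flat}(M_\varepsilon)$, and the higher-order terms contribute a multiplicative factor $\bigl(1+O((\varepsilon/R)^2)\bigr)$, which is stronger than the desired $o(\varepsilon/R)$.

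The only delicate point is that the $O((\varepsilon/R)^2)$ remainder should factor uniformly outside the (finite) sum in the curvature expansion, so that the comparison is with the whole flat tube volume rather than term by term. This works because the implicit constants come from the analytic tails of $\sin$ and $\cos$ alone, independent of $M$ and of the index $j$, and the sum in \eqref{formula} has at most $\lfloor n/2\rfloor+1$ terms with fixed combinatorial coefficients. I therefore do not anticipate a serious mathematical obstacle; the main work is the bookkeeping of Weyl's explicit spherical formula, which is already done in detail in \cite{Gray}.
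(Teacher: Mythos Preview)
Your proposal is correct and follows essentially the same approach as the paper: both start from Weyl's explicit spherical tube formula and Taylor expand in the small parameter $\varepsilon/R$. The only cosmetic difference is that the paper works with the form already summed into Lipschitz--Killing curvatures, evaluates the radial integral $\int_0^{\varepsilon/R}(\sin\rho)^{q+2j-1}(\cos\rho)^{n-2j}\,d\rho$ in closed form as a Gauss hypergeometric function ${}_2F_1$, and then uses $\sin x=x+o(x)$ and ${}_2F_1(a,b;c;x^2)=1+o(x)$, whereas you expand the Jacobian integrand $R^{q-1}\sin^{q-1}(t/R)\det(\cos(t/R)I-R\sin(t/R)S_\nu)$ directly before integrating; this bypasses the hypergeometric detour and even yields the slightly sharper $O((\varepsilon/R)^2)$ remainder.
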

{\bf Remark:} {\it Before giving the proof, let us notice that this proposition has an immediate consequence: if $\varepsilon\ll R$ then we can use the result of the previous section to deduce the properties of concentration of the measure around $M$, up to a relative error controlled
by $\varepsilon/R$.}
\begin{proof}
 Let us set $N=n+q$. In \cite{Weyl}, Weyl deduced the following formula for the volume of a tube of ``radius $a$'' in a sphere of radius $R$:
 \begin{align}
 {\rm Vol}_{S^N}(M_\varepsilon)=\frac {2\pi^{\frac q2}}{\Gamma(\frac q2)}\sum_{j=0}^{\lfloor n/2\rfloor} \frac {K_{2j}(M) R^{2j+q}}{q(q+2)\cdots (q+2j-2)} \int_0^{\frac \varepsilon{R}} (\sin\rho)^{q+2j-1}(\cos\rho)^{n-2j} d\rho.
 \end{align}
 Here, the radius $a$ is not the geodesic radius along the sphere but the Euclidean radius in the tangent space. It is related to the geodesic radius $\varepsilon$ through the relation $a=R\tan \frac {\varepsilon}R$.
 If we use the change of variable $x=\sin^2\rho/\sin^2(\varepsilon/R)$ we get
 \begin{align}
 \int_0^{\frac \varepsilon{R}} (\sin\rho)^{q+2j-1}(\cos\rho)^{n-2j} d\rho=& \frac 12 (\sin(\varepsilon/R))^{q+2j} \int_0^1 dx\ \frac {x^{\frac q2+j-1}}{\left(1-\sin^2 (\varepsilon/R)x\right)^{j-\frac {n-1}2}}\cr
 =&\frac {(\sin(\varepsilon/R))^{q+2j}}{q+2j} {}_2F_1\left(j+\frac q2,j-\frac {n-1}2;j+\frac q2; \sin^2 \frac {\varepsilon}R\right).
 \end{align}
Thus,
 \begin{align}
 {\rm Vol}_{S^N}(M_\varepsilon)=\frac {2\pi^{\frac q2}}{\Gamma(\frac q2)}\sum_{j=0}^{\lfloor n/2\rfloor} \frac {K_{2j}(M) R^{2j+q}(\sin(\varepsilon/R))^{q+2j}}{q(q+2)\cdots (q+2j)}
 {}_2F_1\left(j+\frac q2,j-\frac {n-1}2;j+\frac q2; \sin^2 \frac {\varepsilon}R\right).
 \end{align}
Using that $\sin x =x+o(x)$ and that ${}_2F_1(a,b;c;x^2)=1+o(x)$ for $x\to 0$, and that the volume of the discs on a sphere are the same as in the tangent space up to correction of order $\varepsilon/R$, we get the proof of our assert.
\end{proof}
This corroborates our previous discussion, showing that we can use formula \eqref{formula} when the radius of the tube is small compared with the radius of curvature of the sphere. Notice that the factor containing the mean curvatures seems to suggest that
one should look for submanifolds having large curvatures in order to look for concentration phenomena. However, it is well-known that in the case of spheres, the concentration is on equators, which are totally geodesic submanifolds with the
property that the extrinsic curvature vanishes. Therefore, in this case, the Lipschitz-Killing curvatures are all zero, contradicting our intuition. Before explaining why this happens, let us see how things work in the more general case.

\subsection{The general case}
Let $M$ be a Riemannian submanifold of codimension $q$ in a compact Riemannian manifold $N$ of dimension $q+n$. Let $M_\varepsilon$ a tube of radius $\varepsilon$ around $M$. We can coordinatize the tube as follows. Fix a point $p$ on $M$ with local coordinates $\bar x$. Consider
the normal bundle $\mathcal NM$ of $M$ in $\Sigma$ and let $\hat n_j(\bar x)$, $j=1,\ldots, q$, an orthonormal basis of $\mathcal N_pM$. We can introduce polar coordinates $\theta_a$, $a=1,\ldots,q-1$,
and director cosines $\omega^j(\bar\theta)$ to parametrise the
arbitrary direction orthogonal to $M$ as $n(\bar \theta;\bar x)=\sum_j \omega^j(\bar\theta)\hat n_j(\bar x)$. Let us consider the geodesic $\gamma_{\bar x, \bar \theta}(t)$ in $N$ starting at $t=0$ from $p$ in the direction $n(\bar \theta;\bar x)$, where $t$ is the geodesic length parameter. The tube of
radius $\varepsilon$ is defined by all such geodesics for $t\leq \varepsilon$. For $\varepsilon$ small enough it is well defined. We use the coordinates $(\bar x, \bar \theta, t)$ in the tube. In order to compute the volume of the tube in the
measure of $N$, we need to compute the measure in the given local coordinates. We do it assuming that the coordinate $\underline x$ cover the whole $M$ up to a subset of vanishing measure. This is not a restriction since this hypothesis can be replaced
by the introduction of a partition of unity.
In $p\equiv \bar x$, let us choose an orthonormal frame in $T_pM$, say $e_a$, $a=1,\ldots,n$, to be used as a frame of Fermi along the normal geodesic $\gamma_{\bar x, \bar \theta}(t)$. The Jacobian we are interested in is
\begin{align}
 J=(\partial_{\bar x} \gamma_{\bar x, \bar \theta}; \partial_{\bar\theta} \gamma_{\bar x, \bar \theta}; \partial_t \gamma_{\bar x, \bar \theta}).
\end{align}
It is clear that by construction $\partial_{\bar\theta} \gamma_{\bar x, \bar \theta}; \partial_t \gamma_{\bar x, \bar \theta}$ just provides the measure of the volume form of the disc generated by the normal geodesics from $p$, say $dVol_{D_{\bar x}}(\bar \theta,t)$.
Notice that for generic $M$ it is expected to depend on $\bar x$. The remaining contribution can be computed by employing the Jacobi equation w.r.t. the Fermi frame. It is (the tilde just means we are restricting to the directions of the Fermi frame)
\begin{align}
 \ddot {\tilde J}_{ab} +\sum_c R(\dot \gamma, e_a, \dot \gamma, e_c) \tilde J_{cb}=0, \label{Jacobian-equation}
\end{align}
where the dot indicates derivative w.r.t. to $t$. Here $\gamma\equiv \gamma_{\bar x, \bar \theta}$ and $e_a$ are to be intended as Fermi transported along the geodesic. Because of this, we have
\begin{align}
 \frac d{dt} R(\dot \gamma, e_a, \dot \gamma, e_c)=(\nabla_{\dot \gamma} R)(\dot \gamma, e_a, \dot \gamma, e_c).
\end{align}
Notice that for $N$ compact the matrix $R(\dot \gamma, e_a, \dot \gamma, e_c)$ is symmetric and positive definite. In general, the solution of equation \eqref{Jacobian-equation} is completely determined by the Cauchy data $\tilde J_{cb}|_{t=0}$,
$\dot {\tilde J}_{cb}|_{t=0}$. For example, deriving \eqref{Jacobian-equation} in $t=0$, we get
\begin{align}
 \dddot {\tilde J}_{ab}+\sum_c R(\dot \gamma, e_a, \dot \gamma, e_c) \dot{\tilde J}_{cb} +\sum_c (\nabla_{\dot \gamma} R)(\dot \gamma, e_a, \dot \gamma, e_c) \tilde J_{cb}=0,
\end{align}
and iterating this operation, one gets $\left. \frac {d^n {\tilde J}_{ab}}{dt^n} \right|_{t=0}$ as a function of $\tilde J_{ab}$, $\dot {\tilde J}_{ab}$ and $R(\dot \gamma, e_a, \dot \gamma, e_c)$ and all its covariant derivatives up to order $n-2$ along the direction $\dot \gamma$, in the point $p$. More in general, we can write formally the solution in the form
\begin{align}
 \tilde J(\bar x, \bar \theta, t)=J_0(\bar x, \bar \theta)+\sum_{j=1}^\infty (A_j  J_0 +B_j \dot J_0) (\bar x, \bar \theta)  \frac {t^j}{j!}, \label{general-jacobian}
\end{align}
where $J_0$, $\dot J_0$, $A_j$, $B_j $ are $n\times n$ matrix-valued functions of $(\bar x, \bar \theta, t)$ defined by
\begin{align}
 J_0=&\tilde J(\bar x, \bar \theta, 0), \\
 \dot J_0=& \frac {d\tilde J}{dt} (\bar x, \bar \theta, 0), \\
 (A_1)_{ab}=&0, \qquad (B_1)_{ab}=\delta_{ab}, \\
 A_{j+1}=& \nabla_{\dot \gamma} (A_j)_{ab}-\sum_c (B_j)_{ac}R(\dot \gamma, e_c, \dot \gamma, e_b), \\
 B_{j+1}=& \nabla_{\dot \gamma} (B_j)_{ab}+ (A_j)_{ab}.
\end{align}
Let us assume the analyticity condition that for any given $(\bar x, \bar \theta)$ the series \eqref{general-jacobian} has a strictly positive convergence radius. Since $N$ is compact and $M$ is closed, then also $M$ is compact and there is a minimum positive
radius $\tau$, such that \eqref{general-jacobian} converges uniformly in any region $t\leq \varepsilon <\tau$. We can fix such an $\varepsilon$ to define the tube.
Moreover, notice that $\tilde J(\bar x, \bar \theta, 0)$ determines the change of variables along $M$, and its determinant does not depend on $\bar \theta$, while
\begin{align}
 \dot {\tilde J} (\bar x, \bar \theta, 0)_{ab}=\sum_{c=1}^n \sum_{s=1}^q \omega^s(\bar\theta) K^s_{ac}(\bar x) {\tilde J} (\bar x, \bar \theta, 0)_{cb},
\end{align}
where $K^j_{ab}$ is the second fundamental form of the embedding of $M$ along the direction $n^j$ and $\omega^j$ are the director cosines defined above.
Therefore, we have proven the following proposition:
\begin{prop}
Let $M$ be a Riemannian submanifold of codimension $q$ in a compact Riemannian manifold $N$ of dimension $q+n$. Let $M_\varepsilon$ be a tube of radius $\varepsilon$ around $M$, coordinatized as above. Then, the volume element $dV$ in the tube is
\begin{align}
 dV= dVol_{D_{\bar x}}(\bar \theta,t) dVol_M(\bar x) \det\left( I_n+\sum_{j=1}^\infty \left(A_j (\bar x, \bar \theta)  +B_j(\bar x, \bar \theta)  \sum_{s=1}^q \omega^s(\bar\theta) K^s(\bar x)\right)  \frac {t^j}{j!},  \right),\label{formulone}
\end{align}
where $K^s$ is the symmetric matrix with components $K^s_{ab}$, that is the second fundamental form along the normal direction $s$, $I_n$ is the $n\times n$ identity matrix.
\end{prop}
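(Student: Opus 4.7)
The plan is to assemble the identity by computing the Jacobian $J$ in block form and then factoring out the initial datum of the Jacobi equation. First I would split the coordinate differentials into those transverse to the normal geodesic, namely $\partial_{\bar\theta}\gamma$ and $\partial_t\gamma$, and those along the Fermi frame, $\partial_{\bar x}\gamma$. By construction, $(\partial_{\bar\theta}\gamma,\partial_t\gamma)$ parametrizes the exponential image of the normal disc at $p$, and so its contribution to $|\det J|\,d\bar x\,d\bar\theta\,dt$ is exactly $dVol_{D_{\bar x}}(\bar\theta,t)$ times a factor that encodes the remaining tangent-to-$M$ directions. Hence it remains to control $\det\tilde J(\bar x,\bar\theta,t)$, where $\tilde J$ is the $n\times n$ matrix whose columns are the components of $\partial_{\bar x}\gamma$ in the Fermi frame $e_a$.

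Next I would invoke the Jacobi equation \eqref{Jacobian-equation} together with the recursion defined by $A_1,B_1$ and \eqref{general-jacobian}. Because $A_j,B_j$ are determined purely by the curvature tensor and its iterated covariant derivatives along $\dot\gamma$ in $p$, the formal series \eqref{general-jacobian} is the unique solution to the Cauchy problem with data $J_0=\tilde J(\bar x,\bar\theta,0)$ and $\dot J_0=\dot{\tilde J}(\bar x,\bar\theta,0)$, and it converges uniformly on $\{t\le\varepsilon\}$ for $\varepsilon<\tau$ by the analyticity assumption spelled out in the text. The key observation is then the two identifications of the boundary data: $\det J_0(\bar x)$ is the Jacobian of the chosen chart on $M$ in the orthonormal frame $e_a$, so $\det J_0(\bar x)\,d\bar x = dVol_M(\bar x)$; and, since the $e_a$ are Fermi transported while the initial direction $\dot\gamma(0)=n(\bar\theta;\bar x)=\sum_s\omega^s(\bar\theta)\hat n_s(\bar x)$ is a unit normal vector, the Weingarten formula yields
\begin{align*}
\dot{\tilde J}_{ab}(\bar x,\bar\theta,0)=\sum_c\sum_{s=1}^q \omega^s(\bar\theta)\,K^s_{ac}(\bar x)\,\tilde J_{cb}(\bar x,\bar\theta,0),
\end{align*}
which is precisely the relation stated just before the proposition.

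At this point I would factor the common right factor $J_0(\bar x,\bar\theta)$ out of the series \eqref{general-jacobian}, using the preceding identity to rewrite $B_j\dot J_0=B_j\bigl(\sum_s\omega^s K^s\bigr)J_0$. This produces
\begin{align*}
\tilde J(\bar x,\bar\theta,t)=\left[I_n+\sum_{j=1}^\infty\left(A_j(\bar x,\bar\theta)+B_j(\bar x,\bar\theta)\sum_{s=1}^q\omega^s(\bar\theta)K^s(\bar x)\right)\frac{t^j}{j!}\right]J_0(\bar x,\bar\theta),
\end{align*}
and taking the determinant gives the bracket that appears in \eqref{formulone}, times $\det J_0(\bar x)$, the latter combining with $d\bar x$ to yield $dVol_M(\bar x)$. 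Multiplying by the already-identified transverse contribution $dVol_{D_{\bar x}}(\bar\theta,t)$ completes the identification of $dV$.

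The one genuinely delicate point in this plan is the clean separation of the Jacobian into the transverse disc factor and the tangential matrix $\tilde J$: one must verify that the mixed block $\partial_{\bar x}\gamma$ does not spoil the disc-measure identification. This is the place where the choice of Fermi frame is essential, since Fermi transport guarantees orthogonality of $e_a$ to $\dot\gamma$ along the whole geodesic, so the block decomposition of $J$ is actually block-triangular in the Fermi basis and its determinant factorizes as claimed. Everything else is formal manipulation of the Cauchy data and the recursion for $A_j,B_j$.
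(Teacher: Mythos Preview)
Your proposal is correct and follows essentially the same approach as the paper: the paper's proof is precisely the discussion preceding the proposition, which sets up the Jacobian $J$, separates off the disc factor $dVol_{D_{\bar x}}(\bar\theta,t)$, solves the Jacobi equation for $\tilde J$ via the formal series \eqref{general-jacobian} with the recursion for $A_j,B_j$, identifies $\dot J_0$ through the second fundamental form, and then factors out $J_0$ to obtain $dVol_M$ times the determinant in \eqref{formulone}. Your treatment is in fact slightly more careful than the paper's, since you explicitly justify the factorization of $\det J$ via the block-triangular structure induced by the Fermi frame, a point the paper dispatches with ``It is clear that by construction''.
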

This very general formula is clearly of poor practical usage since for applications one needs to have control of the Riemann tensor and all its covariant derivatives. In any case, we can see that if we look for the concentration of the measure around $M$, the main ingredients entering into the game are the extrinsic curvatures $K^s(\bar x)$ and the volume of $M$. Of course, large values
of the curvatures may amplify the last factor. However, if the sign of the curvatures is constant because we are looking for convex regions, then, large values of the curvatures may correspond to a small value of the volume of $M$. For example, it is well
known that the measure of the spheres concentrates on equators which are totally geodesic subvarieties, thus having zero extrinsic curvatures. This maximizes the volume of $M$.  \\
This suggests that the best candidates for the concentration of the measure are totally geodesic subvarieties. The contribution of the curvatures should then be to maximize the dependence on $t$ in $t=0$ through the coefficients $A_j$. However, it is
quite hard to say more in the general case, both because it is not guaranteed the existence of totally geodesic subvarieties and because it is quite hard to have a uniform control on the coefficients $A_j$ and $B_j$. For these reasons, we now move to specific examples.

\subsection{Compact Symmetric spaces}
We want to apply our general formula to the case of compact symmetric spaces $\Sigma= G/H$, where $G$ is a compact Lie group and $H$ is a symmetrically embedded subgroup. The reason is that they are simple enough to allow for a very explicit calculation of the coefficients $A_j$ and $B_j$, and, at the same time, they contain several totally geodesic submanifolds.  In a sense, they are the simplest generalizations of $S^N =SO(N+1)/SO(N)$. Here we consider the case where $G$. is a
simple group, but our construction can be extended to semisimple groups in an obvious way. We assume that the dimension of $\Sigma$ is $N=n+q$ while $M$ is an $n$ dimensional submanifold.
$\Sigma$ is endowed by a metric $g_{ij}$ that is invariant under both the left and the right translations generated by $G$. Since $G$ is compact, the metric is induced by the Killing form of $Lie(G)$, up to a (negative) constant. The corresponding
Riemann tensor is covariantly constant. In particular, $\Sigma$ is an Einstein manifold with Ricci tensor $R_{ij}=\frac SN g_{ij}$, where the scalar curvature $S$ is a constant. We have the following orthogonal decomposition
\begin{align}
 Lie (G)=Lie(H)\oplus Lie(H)^\perp.
\end{align}
The elements of $Lie(H)$ act as infinitesimal isometries leaving fixed the points of $\Sigma$, while the elements of $Lie(H)^\perp$ generate translations. At each point $p$ of $M$ we can take of vectors $\{\vec n_1,\ldots, \vec n_q\}$ forming a basis
of the normal space of $T_pM$ in $T_p\Sigma$. We can take $\vec n_j$ as elements of $Lie(H)^\perp$. The disc of radius $a$ in $T_p M^\perp$ defined by
\begin{align}
 D_a(p)=\{ x_1 \vec n_1+\cdots+x_q \vec n_q| x_1^2+\cdots+x_q^2\leq a^2 \}
\end{align}
is mapped to a geodesic disc by the exponential map. The orthogonal geodesics from $p$ are thus of the form
\begin{align}
\gamma(t)= e^{t \sum_j v^j \vec n_j}\cdot p
\end{align}
where $\cdot$ indicates the action of the elements of $G$ on $\Sigma$, $\sum_j (v^j)^2=1$, and the exponential is in the sense of groups. Then, $\varepsilon$ is the geodesic distance of $\gamma(a)$ from $p$.\\
The main point now is that, since the Riemann tensor is covariantly constant, we have
\begin{align}
 (\nabla_{\dot \gamma} R)(\dot \gamma, e_a, \dot \gamma, e_c)=0.
\end{align}
Therefore, the matrix
$$R(\dot \gamma, e_a, \dot \gamma, e_c)=R(n(\bar x;\bar\theta), e_a, n(\bar x;\bar\theta), e_c)_{\bar x}=: A(\bar x, \bar \theta)_{ac}$$
is constant in $t$ and we need just to evaluate it in the point $p$. Moreover, $\Sigma$ is compact so that the matrix $A$ is symmetric and positive definite. Hence, it exists an orthogonal matrix $\Omega(\bar x, \bar \theta)$ such that
\begin{align}
 A(\bar x, \bar \theta)=\Omega(\bar x, \bar \theta) D^2(\bar x, \bar \theta) \Omega(\bar x, \bar \theta)^T,
\end{align}
where $D^2$ is a diagonal matrix with positive eigenvalues $d_j^2(\bar x, \bar \theta)$.
If, for any given real function $f$, we define $f(Dt)$ as the diagonal matrix having $f(d_j t)$ as diagonal elements, and $f(\sqrt A t)=\Omega f(Dt) \Omega^T$, then we get for the volume element $dV$ in the tube is given by the following
\begin{prop}
Let $M$ be a Riemannian submanifold of codimension $q$ in a compact Riemannian symmetric manifold $\Sigma$ of dimension $q+n$. Let $M_\varepsilon$ be a tube of radius $\varepsilon$ around $M$, coordinatized as above. Then, the volume element $dV$ in the tube is
\begin{align}
 dV= dVol_{D_{\bar x}}(\bar \theta,t) dVol_M(\bar x) \det\left( \cos (\sqrt {A(\bar x, \bar \theta)} t) + \frac {\sin (\sqrt {A(\bar x, \bar \theta)} t)}{\sqrt {A(\bar x, \bar \theta)}}  \sum_{s=1}^q \omega^s(\bar\theta) K^s(\bar x)\right). \label{formulozza}
\end{align}
\end{prop}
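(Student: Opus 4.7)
The plan is to exploit the fact that on a Riemannian symmetric space $\Sigma=G/H$ the Riemann tensor is covariantly constant, $\nabla R\equiv 0$, in order to reduce the Jacobi equation \eqref{Jacobian-equation} to a \emph{constant-coefficient} linear ODE whose fundamental matrix solutions are the trigonometric functions of $\sqrt{A}\,t$ already introduced in the text.

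First, since the frame $\{e_a\}$ is Fermi-transported along the normal geodesic $\gamma_{\bar x,\bar\theta}$ and $\nabla R\equiv 0$, I would note
\[
 \tfrac{d}{dt}R(\dot\gamma,e_a,\dot\gamma,e_c)=(\nabla_{\dot\gamma}R)(\dot\gamma,e_a,\dot\gamma,e_c)=0,
\]
so that the symmetric positive-definite matrix $A(\bar x,\bar\theta)_{ac}:=R(\dot\gamma,e_a,\dot\gamma,e_c)$ is $t$-independent. The tangential Jacobi equation then reads $\ddot{\tilde J}+A\,\tilde J=0$, and using the spectral decomposition $A=\Omega D^2\Omega^T$ to define $\cos(\sqrt A\,t):=\Omega\cos(Dt)\Omega^T$ and $\sin(\sqrt A\,t)/\sqrt A:=\Omega D^{-1}\sin(Dt)\Omega^T$ (both entire in the $d_j$, with $\sin(0)/0:=1$), the general solution is
\[
 \tilde J(t)=\cos(\sqrt{A}\,t)\,J_0+\frac{\sin(\sqrt{A}\,t)}{\sqrt{A}}\,\dot J_0.
\]

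Next, I would substitute the initial data already established in the derivation of the previous proposition: $J_0=\tilde J(\bar x,\bar\theta,0)$, whose determinant contributes $dVol_M(\bar x)$, and
\[
 \dot J_0=\Big(\sum_{s=1}^{q}\omega^s(\bar\theta)\,K^s(\bar x)\Big)J_0,
\]
coming from the second fundamental form. Factoring $J_0$ on the right produces exactly the bracket appearing in \eqref{formulozza}; the normal-disc factor $dVol_{D_{\bar x}}(\bar\theta,t)$ is unchanged from the general formula \eqref{formulone}, as the exponential map along the fibres of $\mathcal NM$ is not affected by the symmetric-space reduction. Taking the determinant of $\tilde J$ then yields \eqref{formulozza}.

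The main technical point to watch is the compatibility of this closed-form answer with the formal series \eqref{general-jacobian} from the previous proposition. Since $A_j,B_j$ are polynomial in $R$ and its covariant derivatives along $\dot\gamma$ — all of which vanish on $\Sigma$ — the recursion $A_{j+1}=\nabla_{\dot\gamma}A_j-B_jA$, $B_{j+1}=\nabla_{\dot\gamma}B_j+A_j$ collapses to $A_{j+1}=-B_jA$, $B_{j+1}=A_j$, and a one-line induction with $A_1=0$, $B_1=I$ gives $A_{2k}=(-1)^kA^k$, $B_{2k+1}=(-1)^kA^k$ (odd $A_j$ and even $B_j$ vanishing). The corresponding Taylor sums are precisely $\cos(\sqrt A\,t)$ and $\sin(\sqrt A\,t)/\sqrt A$, confirming the ODE answer above. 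No obstacle of substance is expected beyond this bookkeeping, and possible zero eigenvalues of $A$ are harmless because the bracketed matrix is entire in $\sqrt A$ (only even powers of $\sqrt A$ appear).
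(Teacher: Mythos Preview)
Your proposal is correct and follows essentially the same approach as the paper, which simply states that the result is ``a direct application of \eqref{formulone} with a constant matrix $R$''; you have merely made explicit what that one-line justification entails, both by solving the constant-coefficient Jacobi equation directly and by verifying that the recursion for $A_j,B_j$ collapses to the Taylor coefficients of $\cos(\sqrt A\,t)$ and $\sin(\sqrt A\,t)/\sqrt A$. Your remark on zero eigenvalues of $A$ is even a small improvement over the paper's blanket claim of positive definiteness.
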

This is what we get by a direct application of \eqref{formulone} with a constant matrix $R$. Notice that in general, the volume element of the disc depends on its center $\bar x$, as well as the matrix $A(\bar x, \bar \theta)$.\\
 We now restrict further ourselves to the case when $M$ is a totally geodesic submanifold of $\Sigma$. In this case $K^s(\bar x)=0$.
 Since the sub-manifold $\Sigma$ is totally geodetic, any two points in $M$ are connected by a geodetic of $\Sigma$ which is also a geodesic for $M$. Moreover, $\Sigma$ is symmetric, so it has covariantly constant Riemann tensor (and metric, obviously). Then $dVol_{D_{\bar x}}(\bar \theta,t)$ and $A(\bar x, \bar \theta)$ are independent on $\bar x$ and the formula further reduces to
 \begin{align}
 dV= dVol_{D}(\bar \theta,t) dVol_M(\bar x) \prod_{a=1}^n \cos (d_a(\bar \theta) t). \label{formulino}
\end{align}
Notice that for a compact symmetric space, analyticity is guaranteed, and $t$ can thus be extended so that the above parametrization covers the whole manifold up to a measure null set (the set of focal points). Therefore, we can prove the following proposition.
We can now notice that the range is such that the cosine factors remain non-negative. In this case, we can notice that for $x\in [0,\pi]$ one has
\begin{align}
 \cos x\leq e^{-\frac {x^2}2},
\end{align}
from which we get that everywhere
\begin{align}
 0\leq \prod_{a=1}^n \cos (d_a(\bar \theta) t) \leq e^{-\frac {t^2}2\sum_{a=1}^n d^2_a(\bar \theta)} \label{138}
\end{align}
\begin{prop}
Let $(M_n,\Sigma_n)$ be a family, labeled by $n$, of $n$-dimensional totally geodesic submanifolds $M_n$ of symmetric spaces $\Sigma_n$ of dimension $n+1$ and constant diameter. Let $M^{\varepsilon_n}_n$ be the tube of geodesic radius $\varepsilon_n$ centered in $M_n$.
 Finally, let $m_n$ the Riemannian measure over $\Sigma_n$ normalized so that $\mu_n(\Sigma_n)=1$. If $\lim_{n\to\infty} \sqrt n\varepsilon_n=\infty$ then
 \begin{align}
 \lim_{n\to\infty} \mu(\Sigma-M^{\varepsilon_n}_n)=0.
 \end{align}
\end{prop}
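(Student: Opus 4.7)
The plan is to exploit the codimension-$q=1$ simplification of formula \eqref{formulino}. Since $q=1$, the direction variable $\bar\theta$ reduces to a sign, the normal disc is an interval with $dVol_D(\bar\theta,t)=dt$, and because $M_n$ is totally geodesic in the symmetric space $\Sigma_n$ the Jacobi operator $A=R(n,\cdot,n,\cdot)$ is independent of $\bar x$ (covariant constancy of $R$) and of the sign of $\bar\theta$ (bilinearity in $n$); its eigenvalues $d_a=d_a^{(n)}$ are therefore honest constants depending only on $n$. Because the Fermi chart covers $\Sigma_n$ up to the focal locus (a null set), integrating \eqref{formulino} on the tube and on the whole $\Sigma_n$ and forming the ratio causes $\mathrm{Vol}(M_n)$ to cancel, reducing the claim to the one-dimensional statement
\begin{align*}
\mu_n(\Sigma_n\setminus M_n^{\varepsilon_n})=\frac{\int_{|t|>\varepsilon_n}\prod_{a=1}^n\cos(d_a t)\,dt}{\int\prod_{a=1}^n\cos(d_a t)\,dt}\longrightarrow 0,
\end{align*}
where both integrals run over the natural $t$-range bounded by the first focal points on either side of $M_n$ (on which, by the discussion preceding \eqref{138}, all cosines are nonnegative).

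Set $D_n^2:=\sum_{a=1}^n d_a^2$. For the numerator I would use \eqref{138}, namely $\prod_a\cos(d_a t)\leq e^{-D_n^2 t^2/2}$, and the standard Gaussian tail bound $\int_{\varepsilon_n}^{\infty}e^{-D_n^2 t^2/2}\,dt\leq (D_n^2\varepsilon_n)^{-1}e^{-D_n^2\varepsilon_n^2/2}$. For the denominator I would combine $\cos x\geq 1-x^2/2$ with the Weierstrass product inequality $\prod_a(1-u_a)\geq 1-\sum_a u_a$ (valid for $u_a\in[0,1]$) to conclude $\prod_a\cos(d_a t)\geq 1-D_n^2 t^2/2\geq \tfrac12$ on $|t|\leq 1/D_n$, so that the denominator is at least $1/D_n$. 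Combining produces
\begin{align*}
\mu_n(\Sigma_n\setminus M_n^{\varepsilon_n})\leq \frac{C}{D_n\varepsilon_n}\,e^{-D_n^2\varepsilon_n^2/2}
\end{align*}
for an absolute constant $C$.

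To close the argument I would identify $D_n^2=\mathrm{tr}\,A=\sum_a R(n,e_a,n,e_a)=\mathrm{Ric}_{\Sigma_n}(n,n)$, the normal component of the Ricci tensor, and invoke the fact that for a compact Einstein symmetric space of dimension $n+1$ at fixed diameter the Einstein constant grows linearly in $n$ (as illustrated by the canonical examples $S^{n+1}_R,\ \mathbb{CP}^n,\ldots$ discussed in the preceding subsection, and more generally by rescaling from the Killing-form normalization). Hence $D_n^2\geq cn$ for some $c>0$, whence $D_n^2\varepsilon_n^2\geq c\,n\varepsilon_n^2\to\infty$ under the hypothesis $\sqrt n\,\varepsilon_n\to\infty$, and the displayed Gaussian factor drives the bound to zero.

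The main technical point is the lower bound on the denominator: one has to certify that the central peak of $\prod_a\cos(d_a t)$, of natural width $1/D_n\sim 1/\sqrt n$, contributes enough mass compared to the exponentially small tails. The hypothesis $\sqrt n\,\varepsilon_n\to\infty$ is precisely what separates these two scales. The only external input is the linear growth $D_n^2\gtrsim n$ of the normal Ricci, which is built into the ``constant diameter'' assumption; if one wished to weaken the latter, this growth would have to be supplied directly as part of the hypothesis.
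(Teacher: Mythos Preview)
Your argument is correct and follows the same route as the paper: reduce via \eqref{formulino} to a one-dimensional integral in $t$, bound the tail by the Gaussian estimate \eqref{138}, and identify $D_n^2=\sum_a d_a^2$ with the normal Ricci $\mathrm{Ric}(n,n)$, which grows linearly in $n$ at fixed diameter. The one substantive difference is that the paper's proof only shows the \emph{numerator} integrand is uniformly bounded by $e^{-\varepsilon_n^2(an+b)/2}$ on $\{t\geq\varepsilon_n\}$ and stops there, without explicitly controlling the normalizing constant; your lower bound on the denominator via $\cos x\geq 1-x^2/2$ and the Weierstrass product inequality is a genuine addition that closes what is arguably a gap in the paper's presentation, since the normalization $Z_n\sim 1/D_n\sim 1/\sqrt n$ does shrink and must be compared against the exponential tail. (You may also note that the paper's final line reads $\lim_n\varepsilon_n^2 n=0$, which is evidently a typo for $=\infty$.)
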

\begin{proof}
 Suppose we consider the measure \eqref{formulino} normalised to 1. We also assume that $\Sigma_n$ has diameter $L$. We need to consider
 \begin{align}
 \lim_{n\to\infty} \int_{\Sigma_n}  dVol_{D}(\bar \theta,t) dVol_M(\bar x) f_n
 \end{align}
 where
 \begin{align}
 f_n=\prod_{a=1}^n \cos (d_a(\bar \theta) t) \chi_{(\Sigma-M^{\varepsilon_n}_n)},
 \end{align}
and $\chi_E$ is the characteristic function of the set $E$. Since the codimension of $\Sigma$ is 1, we have that
\begin{align*}
\sum_{a=1}^n d^2_a(\bar \theta)= \sum_{a=1}^n R(n(\bar x;\bar\theta), e_a, n(\bar x;\bar\theta), e_c)_{\bar x}=Ric(n(\bar x;\bar\theta), n(\bar x;\bar\theta)),
\end{align*}
where $Ric$ is the Ricci tensor of $M$. It is known that for a symmetric manifold of constant diameter and dimension $s$, the Ricci tensor has the form $R=(as+b)g$, $g$ being the invariant metric and $a>0$ and $b$ constants independent on $s$. For example, this follows easily from the calculations in \cite{CU22} and \cite{CaSc22}. Applied to our case and using \eqref{138} this shows that
\begin{align}
 f_n\leq e^{-\frac {t^2}2 (an+b)} \chi_{(\Sigma-M^{\varepsilon_n}_n)}
\end{align}
for some constants $a>0$ and $b$. Since the integral is extended in $t\geq \varepsilon_n$, we get that the integrand goes to zero uniformly at worst as $e^{-\frac a2 \varepsilon^2_n n}$ when $n$ diverges. Since $\lim_{n\to\infty} \varepsilon^2_n n=0$,
the assert is proved.
\end{proof}
{\bf Remark:} The assumption for the codimension to be 1 has been made to keep the proof technically simple. We believe that the same proposition is true for constant codimension $q>1$ and we expect it to hold also for codimension $q_n$, if it doesn't grow
too fast with $n$. We leave the investigation of this point for future work.

\section{Characterization through wasserstein and box distance}\label{box}

 Let $(M,\mu,g)$ be a Riemann manifold and $C\subseteq M$ a submanifold. The projection map $proj_C:M\rightarrow C$ is defined as a map such that $d(x,proj_C(x))=d(x,C)$, where $d$ is the geodetic distance associated with $g$. For the existence and, more in general, a theory of distance functions in $\mathbb{R}^n$ context see e.g. \cite{CanSin}; for an extension of this theory to a Riemannian context see e.g.\cite{ManteMen}, \cite{Fath}.

\begin{prop}\label{WassConcLoc}
 Let $((N_n,\mu_n,g_n),M_n)$ a sequence of Riemannian manifolds, with $M_n \subsetneq N_n$ Riemannian submanifold with endowed measure $\#proj_{N_n,M_n}\mu_n$. Let $d_{W_2}$ be the Wasserstein distance of order 2.
 If $d_{W_2}(\mu_n,\#proj_{N_n,M_n}\mu_n)\rightarrow 0$ then $M_n$ is a concentration locus for $N_n$
\end{prop}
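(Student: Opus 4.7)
The plan is to reduce the hypothesis $d_{W_2}(\mu_n,\#proj_{M_n}\mu_n)\to 0$ to an integral decay statement about the distance to $M_n$, and then extract the concentration locus property via a straightforward Markov--Chebyshev argument.

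First I would observe a one-sided comparison that makes $W_2$ extremely rigid when the target is supported on $M_n$. Let $\pi_n$ be any coupling of $\mu_n$ with $\#proj_{M_n}\mu_n$. Since $\#proj_{M_n}\mu_n$ is supported on $M_n$, for $\pi_n$-a.e.\ pair $(x,y)$ we have $y\in M_n$, whence $g_n(x,y)\ge g_n(x,M_n)$. Taking the infimum over couplings yields
\begin{align*}
d_{W_2}(\mu_n,\#proj_{M_n}\mu_n)^{2}\;\ge\;\int_{N_n} g_n(x,M_n)^{2}\,d\mu_n(x).
\end{align*}
(In fact, the projection coupling shows this lower bound is attained, but only the inequality is used below.) Setting $\eta_n^{2}:=\int_{N_n} g_n(x,M_n)^{2}\,d\mu_n$, the assumption forces $\eta_n\to 0$.

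Next I would apply Markov's inequality to the non-negative function $g_n(\cdot,M_n)^{2}$: for any $\varepsilon_n>0$,
\begin{align*}
\mu_n\bigl(\{x\in N_n : g_n(x,M_n)\ge \varepsilon_n\}\bigr)\;\le\;\frac{\eta_n^{2}}{\varepsilon_n^{2}}.
\end{align*}
Choose $\varepsilon_n:=\sqrt{\eta_n}$ whenever $\eta_n>0$ (and anything tending to $0$ otherwise). Then $\varepsilon_n\to 0$ and $\eta_n^{2}/\varepsilon_n^{2}=\eta_n\to 0$. The complement of the tubular neighbourhood $U_n^{\varepsilon_n}$ is exactly $\{x:g_n(x,M_n)\ge\varepsilon_n\}$, so $\mu_n(N_n\setminus U_n^{\varepsilon_n})\to 0$, which is precisely the definition (Def.~\ref{def2}) of $\{M_n\}$ being a concentration locus.

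The main obstacle is really just the lower bound in the first step; everything afterwards is bookkeeping. The subtlety is that the paper only records the definition of $W_1$ explicitly, so I would want to flag that $d_{W_2}$ is taken with the standard convention $d_{W_2}(\mu,\nu)^{2}=\inf_{\pi}\int g(x,y)^{2}\,d\pi$, and that the coupling-with-target-in-$M_n$ argument above does not require any existence/regularity of $proj_{M_n}$ as a measurable map: all one uses is that the transport plan $\pi_n$ is concentrated on $N_n\times M_n$, which is immediate from $\mathrm{supp}(\#proj_{M_n}\mu_n)\subseteq M_n$. Hence the proposition follows without invoking the McCann--Figalli--Rifford--Villani type existence results discussed in the introduction.
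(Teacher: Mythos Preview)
Your argument is correct and is essentially the same as the paper's: both bound $d_{W_2}^2$ below by $\int d_n(x,M_n)^2\,d\mu_n$ and then use the Chebyshev-type inequality $\mu_n(N_n\setminus M_n^{\varepsilon_n})\le d_{W_2}^2/\varepsilon_n^2$. Your version is in fact slightly more complete, since you explicitly construct a sequence $\varepsilon_n=\sqrt{\eta_n}\to 0$ as required by Definition~\ref{def2}, whereas the paper only checks the conclusion for each fixed $\varepsilon>0$ and leaves the diagonal choice implicit.
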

\begin{proof}
If $\pi_n$ is the geodesic projection on $M_n$ and $d_n$ the geodesic distance generated by the invariant metric $g_n$, then, the cost to transport the mass $m_n=\mu_n(N_n\setminus M_n^{\varepsilon_n})$ in terms of $d_{W_2}^2$ is at least
\begin{align*}
\int_{N_n\setminus M_n^{\varepsilon_n}} d_n^2(x,\pi_n(x)) d\mu_n(x)>\varepsilon_n^2 m_n,
\end{align*}
for $M_n^{\varepsilon_n}$ a tubular neighbourhood of radius $\varepsilon_n$ of $M_n$. In particular, for any fixed choice $\varepsilon_n=\varepsilon>0$, since $d_{W_2}(\mu_n,\#proj_{N_n,M_n}\mu_n)\rightarrow 0$, we get that $m_n\rightarrow 0$.
\end{proof}

Proposition \ref{WassConcLoc} cannot be reversed.
Indeed, in case of a concentration locus $M_n\subseteq N_n$, if the diameter of $N_n$ is unbounded $d_W$ could not converge to 0.

Nevertheless the following holds.
\begin{prop}\label{ConcW1}
  Let $(N_n,M_n)$ a sequence as in the previous Proposition such that volumes and diameters of $N_n$ are bounded by $h>0$. If $M_n$ is a concentration locus for $N_n$ then for all $n\in\mathbb{N}$ $d_{W_1}(\mu_n,\#proj\mu_n)\rightarrow 0$
\end{prop}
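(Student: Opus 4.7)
My plan is to build an explicit coupling of $\mu_n$ with $\#\mathrm{proj}_n \mu_n$ via the Monge map $T_n(x) := (x, \mathrm{proj}_n(x))$, where $\mathrm{proj}_n := \mathrm{proj}_{N_n, M_n}$. The push-forward $\gamma_n := \#T_n \mu_n$ is a probability measure on $N_n \times M_n$ whose marginals are $\mu_n$ and $\#\mathrm{proj}_n \mu_n$ by construction, so it is admissible in the Kantorovich problem. Plugging it into the definition of $d_{W_1}$ yields the bound
\begin{align*}
d_{W_1}(\mu_n, \#\mathrm{proj}_n \mu_n) \leq \int_{N_n \times M_n} d_n(x,y) \, d\gamma_n(x,y) = \int_{N_n} d_n(x, \mathrm{proj}_n(x)) \, d\mu_n(x).
\end{align*}

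I would then split the right-hand integral according to the tubular decomposition $N_n = M_n^{\varepsilon_n} \sqcup (N_n \setminus M_n^{\varepsilon_n})$, where $\{\varepsilon_n\}$ is the sequence furnished by the concentration locus hypothesis, so that $\varepsilon_n \to 0$ and $m_n := \mu_n(N_n \setminus M_n^{\varepsilon_n}) \to 0$. On $M_n^{\varepsilon_n}$ the integrand is at most $\varepsilon_n$ by definition of the tube, and on its complement the diameter bound $\mathrm{diam}(N_n) \leq h$ gives the crude estimate $d_n(x, \mathrm{proj}_n(x)) \leq h$. Combining,
\begin{align*}
d_{W_1}(\mu_n, \#\mathrm{proj}_n \mu_n) \leq \varepsilon_n \cdot \mu_n(M_n^{\varepsilon_n}) + h \cdot m_n \leq \varepsilon_n + h \cdot m_n,
\end{align*}
and both summands vanish in the limit $n \to \infty$.

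The only non-routine step is justifying that $\mathrm{proj}_n$ is a well-defined Borel map on a set of full $\mu_n$-measure, since the metric projection onto $M_n$ is multi-valued at focal points. For this I would appeal to the viscosity-solution framework recalled in the introduction: by \cite{CanSin, ManteMen, Fath} the function $x \mapsto d_n(x, M_n)$ is the unique viscosity solution of the eikonal equation with boundary datum on $M_n$, is differentiable off the focal set (which is $\mu_n$-negligible), and the unit-speed geodesic in direction $-\nabla d_n(\cdot, M_n)$ realizes $\mathrm{proj}_n$ there as a Borel map. This is really a bookkeeping issue rather than a genuine obstacle; the crucial geometric input is the diameter bound $h$, which controls the tail $h \cdot m_n$ and precisely addresses the failure mode highlighted in the remark following Proposition \ref{WassConcLoc}.
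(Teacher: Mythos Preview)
Your proof is correct and follows essentially the same approach as the paper: both bound $d_{W_1}$ by the transport cost $\int_{N_n} d_n(x,\pi_n(x))\,d\mu_n(x)$ induced by the projection map, then split this integral over the tube $M_n^{\varepsilon_n}$ and its complement, controlling the two pieces by $\varepsilon_n$ and $h\cdot m_n$ respectively. The paper phrases the argument by contradiction while you proceed directly, and you add a remark on the Borel measurability of $\mathrm{proj}_n$ that the paper leaves implicit, but the mathematical content is identical.
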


\begin{proof}
  By contradiction, suppose that for infinitely many $n$ $d_{W_1}(\mu_n,\#proj\mu_n)\ge k>0$. Since the distance function determines the optimal transport through the project function, $\int_{N_n}d_n(x,\pi_n(x)) d\mu_n(x) \ge k$, where $\pi_n$ is the geodetic
  projection over $M_n$ and $d_n$ the geodetic distance generated by the invariant metric $g_n$ on $N_n$.
Since $M_n$ is a concentration locus for $N_n$ we can set $n'$ in such a way for all $n>n'$ $\epsilon_n, \varepsilon_n<\frac{k}{4h}$ and, $\mu_n(N_n\setminus M_n^{\varepsilon_n})<\epsilon_n$. Therefore,
\begin{align*}
 k&\le\int_{N_n\setminus M_n^{\epsilon_n}}d_n(x,\pi_n(x))d\mu_n(x) +\int_{M_n^{\epsilon_n}}d_n(x,\pi_n(x))d\mu_n(x)\cr
 &\le\int_{N_n\setminus M_n^{\epsilon_n}}hd\mu_n(x)+\int_{M_n^{\epsilon_n}}\frac{k}{4h}d\mu_n(x)\le 2h\frac{k}{4h}=\frac k2,
\end{align*}
 which is a contradiction.

\end{proof}

\begin{rem}\label{distance}
Observe that convergence in Wasserstein distance (Strassen's Theorem \cite{Shi} implies Prohorov distance convergence) implies box distance convergence (Proposition 4.12 \cite{Shi}), which in turn implies $d_{conc}$ convergence (Proposition 5.5 (2) \cite{Shi}).

Therefore, let $M$ be a complete separable metric space
and ($\mu_i$) a sequence of Borel probability measures on $M$.
Consider the following three conditions:
(1) $\mu_i$ converges weakly to $\mu$.
(2) $(M,\mu_i)$ box-converges to $(M,\mu)$.
(3) $(M,\mu_i)$ $d_{conc}$-converges (or concentrates) to $(M,\mu)$.

Then, the following implications hold:
(1) $\Rightarrow$ (2) $\Rightarrow$ (3).

A counterexample of (2) $\Rightarrow$ (1) is easy.
Just take a sequence $x_i$ in $M$ and let $\mu_i$ be the Dirac measure at $x_i$.
Then, all $(M,\mu_i)$ are mm-isomorphic to each other, so that (2) holds.
However (1) does not hold if $x_i$ does not converge in $M$.

A counterexample of (3) $\Rightarrow$ (2) is the sequence of unit spheres $S^n(1)$
with dimension $n$ going to infinity.  $S^n(1)$ $d_{conc}$-converges to one-point space,
but it is divergent for box-distance (see Cor. 5.20 \cite{Shi}).

\end{rem}

\begin{teo}\label{ConcLocDconc}
  Let $(N_n,\mu_n,g_n)$ a sequence of Riemann manifolds with haar measure $\mu_n$, geodesic distance $d_n$ generated by the invariant metric $g_n$ (they are in particular mm-spaces), and $M_n\subseteq N_n$ sub manifolds with measures $\sigma_n=\#proj\mu_n$ and metrics $g'_n=g_n|_{M_n}$. \\
Provided that $d_{box}((N_n,\mu_n),(M_n,\sigma_n))\rightarrow 0$ and assuming $(M_n,\sigma_n,g'_n) \rightarrow_ {d_{conc}} (M,\sigma,g')$ then\\ $(N_n,\mu_n,g_n) \rightarrow_ {d_{conc}} (M,\sigma,g')$.
\end{teo}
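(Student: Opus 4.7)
The proof strategy is to reduce Theorem \ref{ConcLocDconc} to a triangle inequality for $d_{conc}$ on the set of isomorphism classes of mm-spaces, using the intermediate sequence $(M_n,\sigma_n,g'_n)$ as the bridge between the ambient and limit spaces.

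The plan is to carry out three steps. First, I would invoke the implication $d_{box}\Rightarrow d_{conc}$ recorded in Remark \ref{distance} (Shioya, Proposition 5.5(2)), applied to the two sequences of mm-spaces $(N_n,\mu_n,g_n)$ and $(M_n,\sigma_n,g'_n)$: the hypothesis
\[
d_{box}\bigl((N_n,\mu_n,g_n),(M_n,\sigma_n,g'_n)\bigr)\to 0
\]
then yields
\[
d_{conc}\bigl((N_n,\mu_n,g_n),(M_n,\sigma_n,g'_n)\bigr)\to 0.
\]
Second, I would combine this with the assumed $d_{conc}$-convergence $(M_n,\sigma_n,g'_n)\to (M,\sigma,g')$ through the triangle inequality for $d_{conc}$, which is a genuine metric on the set $\mathfrak{M}$ of isomorphism classes of mm-spaces (cf.\ \cite{Shi}):
\[
d_{conc}\bigl((N_n,\mu_n,g_n),(M,\sigma,g')\bigr)\le d_{conc}\bigl((N_n,\mu_n,g_n),(M_n,\sigma_n,g'_n)\bigr)+d_{conc}\bigl((M_n,\sigma_n,g'_n),(M,\sigma,g')\bigr).
\]
Both summands tend to zero by the previous step and the hypothesis, respectively, so the left-hand side tends to zero, which is the desired conclusion.

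The main obstacle, in my view, is not computational but conceptual and concerns making sure that the hypothesis is being used in the right category. One must check that $(M_n,\sigma_n,g'_n)$ is really a well-defined mm-space in the sense of Definition \ref{def2}: that $\sigma_n=\#\mathrm{proj}\,\mu_n$ is a Borel probability measure on $M_n$ (which follows from the almost-everywhere well-definedness of the projection on the complement of the focal set, as recalled in the Introduction), and that $(M_n,g'_n)$ is separable and complete, so that the box and concentration distances are defined on the relevant isomorphism class. Once these identifications are in place, the argument is essentially a two-line application of the implication in Remark \ref{distance} together with the triangle inequality, and no delicate estimate is needed beyond what is already collected in Shioya's framework.
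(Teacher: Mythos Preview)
Your proposal is correct and follows essentially the same route as the paper: invoke Shioya's Proposition~5.5 to pass from $d_{box}\to 0$ to $d_{conc}\to 0$ between $N_n$ and $M_n$, then apply the triangle inequality for $d_{conc}$ with $(M_n,\sigma_n,g'_n)$ as the intermediate term. The additional remarks you make about well-definedness of $(M_n,\sigma_n,g'_n)$ as an mm-space are reasonable caveats but are not part of the paper's argument itself.
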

\begin{proof}
 By Proposition 5.5 \cite{Shi}, $d_{conc}((N_n,\mu_n,g_n),(M_n,\sigma_n,g'_n))\le d_{box}((N_n,\mu_n,g_n),(M_n,\sigma_n,g'_n))$. Hence, $d_{conc}((N_n,\mu_n,g_n),(M_n,\sigma_n,g'_n))\rightarrow 0$.\\
 The following chain of inequalities
 $$d_{conc}((N_n,\mu_n,g_n),(M,\sigma,g))\le d_{conc}((N_n,\mu_n,g_n),(M_n,\sigma_n,g'_n))\ +\ d_{conc}((M_n,\mu_n,g_n),(M,\sigma,g))$$
 plainly drives to the thesis.
\end{proof}

\begin{rem}
The above Theorem holds also with the hypothesis $d_{W_1}(\mu_n,\sigma_n)\rightarrow 0$.
Indeed, we can consider $\sigma_n$ as a measure $\sigma '_n$ on $N_n$ having support $spt(\sigma'_n)=M_n$ where it is equal to $\sigma_n$. Without loss in generality, we can consider $(N_n,\sigma'_n,g_n)$ equal, as mm-spaces, to $(M_n,\sigma_n,g'_n)$, since they are mm-isomorphic. Indeed, two mm-spaces are mm-isomorphic if there exists an isometry between their supports of their respective measures (see \cite{Gro99} p.117).

\end{rem}

Now the following Corollary follows

\begin{cor}
 Let $(N_n,M_n)$ a sequence of Riemannian manifolds $M_n\subsetneq N_n$ such that volumes and diameters of $N_n$ are bounded by $h\in\mathbb{R}$. If $M_n$ is a concentration locus for $N_n$ and $ (M_n,\sigma_n,g'_n)\rightarrow_ {d_{conc}} (M,\sigma,g')$ then $(N_n,\mu_n,g_n) \rightarrow_ {d_{conc}} (M,\sigma,g')$.
\end{cor}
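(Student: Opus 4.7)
The plan is to chain together the previous results, using the bounded volumes/diameters hypothesis to bridge from the concentration locus condition to a Wasserstein (hence box) estimate, and then invoke the already-proved $d_{conc}$ transfer theorem.

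First, I would apply Proposition \ref{ConcW1}: by hypothesis $M_n$ is a concentration locus for $N_n$ and both the volumes and the diameters of $N_n$ are bounded by $h$, so the proposition immediately yields
\[
d_{W_1}(\mu_n, \#\mathrm{proj}\,\mu_n) \longrightarrow 0.
\]
Recall that $\sigma_n = \#\mathrm{proj}\,\mu_n$ by definition, so this says $d_{W_1}(\mu_n, \sigma_n) \to 0$, where on the left-hand side $\sigma_n$ is viewed as a measure supported on $M_n \subseteq N_n$ (the lifted version $\sigma'_n$ from the remark following Theorem \ref{ConcLocDconc}).

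Next, I would invoke the remark following Theorem \ref{ConcLocDconc}, which records that the conclusion of that theorem remains valid under the hypothesis $d_{W_1}(\mu_n,\sigma_n)\to 0$ (the argument passes through Strassen's theorem and the chain $d_W \Rightarrow d_{\mathrm{box}} \Rightarrow d_{\mathrm{conc}}$ recalled in Remark \ref{distance}). Since we are additionally given $(M_n,\sigma_n,g'_n) \to_{d_{conc}} (M,\sigma,g')$, Theorem \ref{ConcLocDconc} (in its Wasserstein form) applies and delivers $(N_n,\mu_n,g_n) \to_{d_{conc}} (M,\sigma,g')$.

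The only delicate point is conceptual rather than technical: making sure that the measure $\sigma_n$ appearing in Proposition \ref{ConcW1} (where it lives naturally on $N_n$ as the push-forward $\#\mathrm{proj}\,\mu_n$) is the \emph{same} mm-space as the $(M_n,\sigma_n,g'_n)$ used in the $d_{conc}$ convergence hypothesis. This is exactly the content of the remark after Theorem \ref{ConcLocDconc}: the two presentations are mm-isomorphic because the lifted measure $\sigma'_n$ has support $M_n$ and the metric $g_n$ restricted to this support coincides with $g'_n$, so the isometry of supports required by the definition of mm-isomorphism (\cite{Gro99}, p.117) holds. Once this identification is made, the corollary is a one-line consequence of Proposition \ref{ConcW1} combined with the Wasserstein variant of Theorem \ref{ConcLocDconc}.
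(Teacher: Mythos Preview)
Your proposal is correct and matches the paper's approach exactly: the paper states the corollary immediately after Proposition~\ref{ConcW1}, Theorem~\ref{ConcLocDconc}, and the Remark (on the $d_{W_1}$ variant) with the phrase ``Now the following Corollary follows,'' leaving the chaining of Proposition~\ref{ConcW1} with the Wasserstein form of Theorem~\ref{ConcLocDconc} implicit. Your write-up simply makes that chain explicit, including the mm-isomorphism identification already spelled out in the Remark.
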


Now we move towards a $d_{box}$-characterization.

\begin{prop}\label{concbox}
Let $M_n$ be a concentration locus for $N_n$, $M_n$ totally geodesic submanifolds, with the properties that the geodesic distance on $M_n$ is the same as in $N_n$. 
 Then, $d_{box}(N_n,M_n)$ converges to 0.
\end{prop}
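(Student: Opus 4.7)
The plan is to build, for each $n$, an explicit pair of parameters $\varphi_n, \psi_n : I \to N_n, M_n$ witnessing a small box distance, using the geodesic projection as the bridge between them.

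First, I would invoke the theory of distance functions recalled in the paper's introduction to obtain the measurable projection $\pi_n : N_n \setminus F_n \to M_n$, where $F_n$ is the measure-zero focal set; since $M_n$ is totally geodesic and inherits its geodesic distance from $N_n$, $\pi_n$ achieves $d_n(x,\pi_n(x)) = d_n(x,M_n)$ and the image distance equals the ambient one, i.e.\ $d_{M_n}(\pi_n(x),\pi_n(y)) = d_n(\pi_n(x),\pi_n(y))$. Next, I would pick any parameter $\varphi_n : I \to N_n$ of $(N_n,\mu_n)$ (which exists by the general parametrisation result recalled right after the definition of mm-space) and set $\psi_n := \pi_n \circ \varphi_n$; by the very definition $\sigma_n = \sharp\pi_n \mu_n$, one has $\sharp\psi_n \mathfrak{L}^1 = \sharp\pi_n(\sharp\varphi_n \mathfrak{L}^1) = \sharp\pi_n \mu_n = \sigma_n$, so $\psi_n$ is a bona fide parameter of $(M_n,\sigma_n)$.

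Now let $\varepsilon_n \to 0$ be a witness of the concentration-locus property, so that $\eta_n := \mu_n(N_n \setminus M_n^{\varepsilon_n}) \to 0$, and define $I_0^{(n)} := \varphi_n^{-1}(M_n^{\varepsilon_n})$. Then $\mathfrak{L}^1(I_0^{(n)}) = \mu_n(M_n^{\varepsilon_n}) \geq 1 - \eta_n$, which takes care of requirement (2) in the definition of $d_{box}$ on pseudo-metrics. For requirement (1), fix $s,t \in I_0^{(n)}$ and put $x = \varphi_n(s)$, $y = \varphi_n(t)$; by the triangle inequality applied twice together with $d_n(x,\pi_n(x)) < \varepsilon_n$ and $d_n(y,\pi_n(y)) < \varepsilon_n$, one obtains
\begin{align*}
\bigl|\, d_n(\varphi_n(s),\varphi_n(t)) - d_{M_n}(\psi_n(s),\psi_n(t))\,\bigr|
= \bigl|\, d_n(x,y) - d_n(\pi_n(x),\pi_n(y))\,\bigr| \leq 2\varepsilon_n,
\end{align*}
using that $d_{M_n}$ and $d_n$ agree on $M_n$ by the totally-geodesic hypothesis. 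Setting $\delta_n := \max(\eta_n, 2\varepsilon_n)$ then yields $d_{box}(\sharp\varphi_n d_{N_n}, \sharp\psi_n d_{M_n}) \leq \delta_n$, hence $d_{box}(N_n, M_n) \leq \delta_n \to 0$.

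The only delicate point in executing this plan is the measurability and well-definedness of the projection $\pi_n$ (since in general it is defined only off the focal/cut locus and may require a Borel selection there), together with the fact that the totally-geodesic hypothesis is what forces $d_{M_n} = d_n|_{M_n \times M_n}$ so that the estimate above is genuinely an estimate between the two pushforward pseudo-metrics on $I$, rather than between two a priori different distances. Once those two technical points are granted, the construction is completely canonical and the conclusion is essentially just the triangle inequality.
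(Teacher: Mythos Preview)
Your proposal is correct and follows essentially the same route as the paper: take a parameter $\varphi_n$ of $N_n$, set $\psi_n=\pi_n\circ\varphi_n$ (which is a parameter of $(M_n,\sigma_n)$ since $\sigma_n=\sharp\pi_n\mu_n$), use the preimage of the tube $M_n^{\varepsilon_n}$ as the good set $I_0$, and apply the triangle inequality together with $d_{M_n}=d_n|_{M_n}$ to bound the pseudo-metric difference by $2\varepsilon_n$. The paper differs only cosmetically, arranging the parameter so that an initial interval $[0,\epsilon_n)$ maps into the complement of the tube (after replacing $\varepsilon_n$ by $\max\{\varepsilon_n,\eta_n\}$), whereas you work with an arbitrary parameter and $I_0=\varphi_n^{-1}(M_n^{\varepsilon_n})$; your version is in fact slightly cleaner and more explicit about why $\psi_n$ is a legitimate parameter.
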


\begin{proof}
   Let us first observe that if $M_n^{\epsilon_n}$ is the tubular neighbourhood of radius $\epsilon_n$, then, for $s,t\in M_n^{\epsilon_n}$ we have
     $$\vert d_n(s,t)- d'_n(proj_{M_n}(s),proj_{M_n}(t))\vert \le O(\epsilon_n),$$
     where $d_n'$ is the geodesic distance in $M_n$.
  Indeed, inside the tube by elementary distance inequalities $\vert d_n(s,t)- d_n(proj_{M_n}(s),proj_{M_n}(t))\vert \le 2\epsilon$, and since by hypothesis $d_n(proj_{M_n}(s),proj_{M_n}(t))=d'_n(proj_{M_n}(s),proj_{M_n}(t))$.\\
   Now define two parameters, $\phi,\psi$ for $N_n$ and $M_n$, respectively,
   $\phi:[0,\epsilon_n)\rightarrow N_n\setminus M_n^{\epsilon_n}$ and $\phi:[\epsilon_n,1)\rightarrow  M_n^{\epsilon_n}$, where $\epsilon_n$ is chosen so that $\mu_n(N_n\setminus M_n^{\epsilon_n})\le\epsilon_n$. This is always possible, since if we have 
   two sequences $a_n$ and $b_n$ positive, converging to zero and such that $\mu_n(N_n\setminus M_n^{a_n})\le b_n$, then we can choose $\epsilon_n=\max \{a_n,b_n\}$.
   Let $\psi:=proj_{M_n}\circ\phi$.\\ 
   From the above inequality, it follows:
   \begin{align*}
   \vert d_n(\phi(s),\phi(t))-  d_n'(\psi(s),\psi(t))\vert\le O(\epsilon_n).
   \end{align*}
   This implies the thesis.

\end{proof}

Combining Proposition \ref{concbox} and Theorem \ref{ConcLocDconc} we get the following Corollary

\begin{cor}\label{ConcLocConv}
 Let $(N_n,\mu_n,g_n)$ be a sequence of mm-spaces, with $M_n$ totally geodesic and with the same geodesic distance as in $N_n$. Suppose that $M_n$ define a concentration locus for $N_n$ with a sequence of radii $\epsilon_n\rightarrow 0$, and $(M_n,\sigma_n,g'_n) \rightarrow_ {d_{conc}} (M,\sigma,g')$. Then, $(N_n,\mu_n,g_n) \rightarrow_ {d_{conc}} (M,\sigma,g')$.
\end{cor}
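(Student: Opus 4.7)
The plan is to chain together Proposition \ref{concbox} and Theorem \ref{ConcLocDconc}, which together cover almost everything the hypotheses give us. First, I would verify that the hypotheses of Proposition \ref{concbox} are satisfied: we are told that $M_n$ is a concentration locus for $N_n$ with radii $\epsilon_n \to 0$, that each $M_n$ is totally geodesic in $N_n$, and that the geodesic distance on $M_n$ agrees with the restriction of the one from $N_n$. These are exactly the hypotheses needed, so I would invoke Proposition \ref{concbox} to conclude
\[
d_{box}\bigl((N_n,\mu_n,g_n),(M_n,\sigma_n,g'_n)\bigr) \longrightarrow 0.
\]

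Next, since we are additionally assuming $(M_n,\sigma_n,g'_n) \to_{d_{conc}} (M,\sigma,g')$, I would feed both of these convergences into Theorem \ref{ConcLocDconc}. The theorem's conclusion is precisely that $(N_n,\mu_n,g_n) \to_{d_{conc}} (M,\sigma,g')$, which is what we want. Under the hood, this step uses Proposition 5.5 of \cite{Shi} (the $d_{conc} \le d_{box}$ inequality) together with the triangle inequality for $d_{conc}$, but these mechanics are already encapsulated in Theorem \ref{ConcLocDconc}.

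Since the corollary is a direct composition of two already-established results, there is no real obstacle once those are in hand. The only mild point to check is compatibility of the measures $\sigma_n$: Theorem \ref{ConcLocDconc} is stated with $\sigma_n = \#\mathrm{proj}\,\mu_n$, and the definition of concentration locus together with the totally-geodesic hypothesis is consistent with this choice (away from focal points the projection is well-defined $\mu_n$-almost everywhere, and the pushforward measure on $M_n$ is what naturally appears when mass concentrates near $M_n$). So the proof reduces to a one-line invocation: apply Proposition \ref{concbox}, then apply Theorem \ref{ConcLocDconc}.
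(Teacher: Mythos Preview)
Your proposal is correct and matches the paper's approach exactly: the paper states the corollary as a direct combination of Proposition \ref{concbox} and Theorem \ref{ConcLocDconc}, which is precisely what you do. Your additional remark about the compatibility of $\sigma_n = \#\mathrm{proj}\,\mu_n$ is a reasonable sanity check but goes slightly beyond what the paper spells out.
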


In \cite{CU22}, by using the Macdonald formula \cite{Ma}, it is shown that $SU(n), Spin(n), Usp(n)$ have all totally geodesic concentration loci.

Observe that even under hypothesis $d_{box} (N_n,H_n)\rightarrow 0$, for any $N_n \in SO(n),SU(n),Spin(n)$, by Corollary 5.20 \cite{Shi}, all of them don't determine $d_{box}$ divergent sequences.

Observe that Corollary \ref{ConcLocConv} applies even if $N_n$ is not a Levy family. In this case, the sequence concentrates anyway to $M$ which is not necessarily a point.
This unveils that the concentration phenomenon is far from being exhausted by the Levy families. 
In particular, if $N_n$ are topological groups, by a Schneider result \cite{Sc}, $M$ should be not only the concentration set but also an $N$-invariant subspace of $S(N)$, where $N$ is a second-countable topological group completion of $\bigcup N_n$, and 
$S(N)$ is the Samuel compactification of $N$.
If $M$ is minimal, then it is the universal minimal flow of $N$.
Since concretely describable universal minimal flows are rather rare, this could be a way to construct them.\\
These constructions can find interesting applications to the sequences of $U(N)$ with different rescaled geometries.

\section*{Acknowledgments}
We are extremely grateful to Alessio Figalli for his insightful suggestions and precise remarks. We also thank Carlo Mantegazza and Takeshi Shioya for some helpful discussions.

\end{document}